%% file: main.tex
\documentclass[11pt,dvipsnames]{amsart}
\input{preamble}
\usepackage[margin=1in]{geometry}

\keywords{K-theory, Tambara functor, cohomological, finite field}

\subjclass[2020]{19D50 (primary), %Computations of higher K-theory of rings
19A49, %K_0 of other rings
55P91 (secondary)%Equivariant homotopy theory in algebraic topology
}

\title{The $K$-theory of finite Tambara fields: \\ away from $p$}
\date{}

\author[N. Wisdom]{Noah Wisdom}\thanks{Department of Mathematics, Stockholm University.
\texttt{noah.wisdom@math.su.se}}

\begin{document}

\begin{abstract}
    In previous work, the author and David Chan computed the algebraic $K$-theory of the constant $C_2$-Tambara field with value the field with two elements, using a method which fails at odd primes. Herein we make progress towards the corresponding odd primary computations using a completely new idea. Particularly, we show that the $K$-theory groups of any constant $C_{p^n}$-Tambara field with value a characteristic $p$ finite field are torsion, and we completely determine these groups after inverting $p$. The away-from-$p$-torsion satisfies a simple pattern predicted by previous work, and a computer-aided computation shows that the $p$-power torsion is nontrivial in general.
\end{abstract}

\maketitle

\input{Introduction}
\input{MainResults}

\bibliographystyle{alpha}
\bibliography{ref}

\end{document}

%% file: preamble.tex
\usepackage{xy}
\usepackage{graphicx, amsmath, amssymb, amsthm,amsfonts}
\usepackage[mathscr]{eucal}
\usepackage[pagebackref, colorlinks, citecolor=Red, linkcolor=NavyBlue, urlcolor=NavyBlue]{hyperref}
\hypersetup{linktoc=all}
\usepackage{comment}
\usepackage{hyperref}
\usepackage[capitalise, nameinlink]{cleveref}
\usepackage{microtype}
\usepackage{tikz-cd}
\usepackage{spectralsequences}
\usepackage{listings}

\newcommand{\Z}{\mathbb{Z}}

\newcommand{\F}{\mathbb{F}}

\crefname{lemma}{Lemma}{Lemmas}
\crefname{theorem}{Theorem}{Theorems}
\crefname{definition}{Definition}{Definitions}
\crefname{proposition}{Proposition}{Propositions}
\crefname{remark}{Remark}{Remarks}
\crefname{corollary}{Corollary}{Corollaries}
\crefname{equation}{Equation}{Equations}
\crefname{construction}{Construction}{Constructions}
\crefname{ex}{Example}{Examples}
\crefname{example}{Example}{Example}
\crefname{appsec}{Appendix}{Appendices}
\crefname{subsection}{Subsection}{Subsections}

\newtheorem{theorem}{Theorem}[section]
\newtheorem{letterthm}{Theorem}

\newtheorem{lemma}[theorem]{Lemma}

\newtheorem{proposition}[theorem]{Proposition}

\newtheoremstyle{BoldRemark} % name
{10pt}                    % Space above
{10pt}                    % Space below
{\upshape}                   % Body font
{}                           % Indent amount
{\bfseries}                  % Theorem head font
{.}                          % Punctuation after theorem head
{.5em}                       % Space after theorem head
{}  % Theorem head spec (can be left empty, meaning ‘normal’)
\theoremstyle{BoldRemark}
\newtheorem{remark}[theorem]{Remark}

\newtheorem{definition}[theorem]{Definition}

\newtheorem{construction}[theorem]{Construction}

\newtheorem{warning}[theorem]{Warning}

\usepackage[textwidth=25mm, textsize=tiny]{todonotes}

\newcommand{\res}{\mathrm{res}}

\newcommand{\tr}{\mathrm{tr}}

\newcommand{\ev}{\mathrm{ev}}

\newcommand{\idle}{\mathrm{idle}}

%% file: Introduction.tex
\section{Introduction}

Algebraic $K$-theory is a machine which inputs things like categories of modules over a ring, categories with notions of short exact sequences, and stable $\infty$-categories, and which outputs a commutative ring spectrum. In foundational work, Quillen computed the algebraic $K$-theory groups of finite fields \cite{Qui72b}. This result forms one of the most important inputs to trace methods, our most powerful tool for accessing the arithmetic information contained in $K$-theory groups. For example, Antieau--Krause--Nikolaus \cite{AKN24}, building on seminal work of Dundas--Goodwillie--McCarthy \cite{DGM13} and Nikolaus--Scholze \cite{NS18} and B\"{o}kstedt--Hsiang--Madsen \cite{BHM93} and many others, used Quillen's computation to access the $K$-theory groups of the rings $\Z/p^n$. This put to rest a decades-old open problem. 

We now introduce our second major player. Tambara functors generalize the notion of ring to equivariant algebra. Really they generalize the notion of ``ring with $G$-action," and they are the algebraic shadows of equivariant ring spectra \cite[Section 7.2]{Bru07}. These look like assignments $R : G/H \mapsto R(G/H)$ of a ring $R(G/H)$ to each finite transitive $G$-set $G/H$, along with numerous internal structure maps encoding change-of-group operations. For example, $G/H \mapsto RO(H)$ defines the real representation Tambara functor and it encodes $\oplus$- and $\otimes$-induction of representations. In keeping with the program of algebraic topology, many homotopy theorists have studied Tambara functors in the past decade following important work of Hill--Hopkins--Ravenel \cite{HHR}. 

Following Nakaoka, a Tambara functor is said to be \emph{field-like} if it has no nontrivial ideals \cite{Nak11a}; we also call such things Tambara fields. If $G$ acts on an ordinary field $\F$, then the assignment $G/H \mapsto \F^H$ defines a \emph{fixed-point} Tambara functor, and it is field-like. Previous work of the author completely classifies field-like Tambara functors when $G = C_{p^n}$ is cyclic of order $p^n$ \cite{Wis24}.

At this juncture, one may straightforwardly ask two questions. First: what are the algebraic $K$-theory groups of a given Tambara functor? Precisely, there is a notion of module over (the Green functor underlying a) Tambara functor, and we can try to compute the $K$-theory of the exact category of finitely generated projective modules. Second: what arithmetic information is contained in the $K$-theory groups of a Tambara functor?

We make significant progress towards the first question levelwise-finite $C_{p^n}$-Tambara fields, using an entirely new method unique to equivariant algebra. The most basic nontrivial case is that of the \emph{constant} Tambara functors $\underline{\F}$ with value a finite characteristic $p$ field $\F$, given by $G/H \mapsto \F$. Their modules are also known as \emph{cohomological} Mackey functors (over $\F$), and Yoshida observed that these are the same thing as modules over an appropriate Hecke algebra \cite{Yos83}.

\begin{letterthm}\label{letterthm:A}
    Let $\F$ be a finite field of characteristic $p$ and $G = C_{p^n}$. The groups $K_i(\underline{\F})$ are finite when $i \geq 1$, and we have  
    \[ 
        K_i(\underline{\F})[p^{-1}] \cong \begin{cases}
            \Z[p^{-1}]^{\oplus n+1} & i=0 \\
            (\mathbb{Z}/(q^j-1))^{\oplus n+1} & i = 2j-1 \\
            0 & \mathrm{else} . 
        \end{cases}
    \]
\end{letterthm}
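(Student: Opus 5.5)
The plan is to move the computation into ordinary noncommutative algebra and then use that relative $K$-theory of nilpotent extensions in characteristic $p$ is $p$-primary. By Yoshida's theorem \cite{Yos83}, modules over $\underline{\F}$ (cohomological Mackey functors over $\F$) are the same as modules over the Hecke algebra
\[
A \;=\; \mathrm{End}_{\F[G]}\Bigl(\textstyle\bigoplus_{H \le G} \F[G/H]\Bigr)^{\mathrm{op}},
\]
and finitely generated projective $\underline{\F}$-modules correspond to finitely generated projective $A$-modules. So $K(\underline{\F}) \simeq K(A)$, where $A$ is a finite-dimensional $\F$-algebra, and in particular a finite ring.

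Let $J \subseteq A$ be the Jacobson radical. It is nilpotent because $A$ is finite-dimensional, and $A/J$ is semisimple. I would identify $A/J$ through the classification of simple cohomological Mackey functors. For a $p$-group $G$ in characteristic $p$, the simple Mackey functors $S_{H,V}$ are indexed by subgroups $H$ up to conjugacy together with a simple $\F[N_G(H)/H]$-module $V$. Since $N_G(H)/H$ is a $p$-group, $V$ must be trivial. One then checks that each $S_{H,\F}$ is cohomological, or equivalently counts the simple $A$-modules via the $n+1$ indecomposable summands $\F[G/H]$, which are permutation modules of $p$-groups and have local endomorphism rings. This gives exactly $n+1$ simple modules, one for each $H = C_{p^k}$ with $0 \le k \le n$, and each has endomorphism ring $\F$. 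By Wedderburn, $A/J \cong \prod_{k=0}^{n} M_{d_k}(\F)$, and Morita invariance gives
\[
K(A/J) \simeq K(\F)^{\times (n+1)}.
\]

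Next I compare $K(A)$ with $K(A/J)$.
\begin{itemize}
\item In degree $0$, idempotents lift modulo the nilpotent ideal $J$, so $K_0(A) \cong K_0(A/J) \cong \Z^{n+1}$.
\item In higher degrees, $A$ is an $\F_p$-algebra and $J$ is nilpotent, so the relative groups $K_*(A,J)$ are $p$-primary torsion. This follows from Weibel's result on nilpotent extensions, or from Goodwillie's theorem that rationally the relative $K$-theory is relative cyclic homology, which vanishes here since everything is $\F_p$-linear; the statement with coefficients $\Z/\ell$ for $\ell \ne p$ is the classical rigidity for nilpotent ideals.
\end{itemize}
The long exact sequence then gives $K_i(A)[p^{-1}] \cong K_i(A/J)[p^{-1}] \cong K_i(\F)[p^{-1}]^{\oplus n+1}$. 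Quillen's computation \cite{Qui72b} gives $K_{2j-1}(\F) = \Z/(q^j-1)$ and $K_{2j}(\F) = 0$ for $j \ge 1$. Since $q^j - 1$ is prime to $p$, inverting $p$ changes nothing in the odd degrees, and this yields the stated formula.

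For finiteness of $K_i(\underline{\F})$ with $i \ge 1$, I would invoke the theorem (Kuku) that higher $K$-groups of finite rings are finite, applied to the finite ring $A$. Alternatively, one can show the relative groups $K_i(A,J)$ are finitely generated, hence finite $p$-groups, and combine this with the finiteness of $K_i(\F)$.

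The step I expect to be the main obstacle is the identification of the semisimple quotient $A/J$, namely showing there are exactly $n+1$ simple modules, each with endomorphism ring exactly $\F$ rather than a larger extension. The $p$-primary nature of the relative $K$-theory is standard and can be cited.
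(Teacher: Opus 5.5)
Your proof is correct, and its skeleton is the paper's: show that the algebra governing $\underline{\F}$-modules is a nilpotent extension of $\F^{\times n+1}$. Then invoke $p$-primarity of relative $K$-theory for nilpotent ideals in $\F_p$-algebras, idempotent lifting for $K_0$, Quillen's computation, and Kuku's finiteness theorem.

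The difference lies in how you produce the nilpotent extension. The paper works inside the Th\'evenaz--Webb presentation of the Mackey algebra $\mu_{\F}$, which is your $A$ up to taking the opposite ring. It first quotients by the central nilpotent elements $c_g-1$, which requires $G$ abelian, to reach the idle algebra $\nu_{\F}$. It then kills the nontrivial restrictions and transfers one at a time, in order of decreasing index, checking that each generates a square-zero ideal. You instead take the Jacobson radical of Yoshida's Hecke algebra and identify $A/J$ by representation theory.

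The step you flag as the main obstacle is actually immediate. Since $G$ is abelian, $\mathrm{End}_{\F G}(\F[G/H])\cong \F[G/H]$ is the group algebra of a $p$-group, hence local with residue field $\F$. Since $\bigoplus_H \F[G/H]$ is multiplicity-free, every $d_k=1$ and $A/J\cong \F^{\times n+1}$. In particular, the kernel of the paper's composite $\mu_{\F}\to \F^{\times n+1}$ is exactly your $J$.

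What each route buys:
\begin{itemize}
\item \emph{Your route} is shorter and avoids the index bookkeeping. It also extends with little change to any finite $p$-group, with $n+1$ replaced by the number of conjugacy classes of subgroups, since each $\F[G/H]$ is indecomposable with head $\F$. That is exactly the generality the paper's warning says the idle method struggles with.
\item \emph{The paper's route} produces the intermediate idle algebra. This gives the finer fiber sequence $K(\mu_{\F},I)\to K(\underline{\F})\to K^{\idle}(\underline{\F})$, which the paper uses to locate the $3$-torsion in $K_1(\underline{\F_3})$ and to state its conjecture on $K^{\idle}$.
\item \emph{Generality in $Q$.} The paper's argument is phrased uniformly for arbitrary $\F_p$-algebras $Q$. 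You can recover that too by base change: $J(\mu_{\F_p})\otimes_{\F_p}Q$ is a nilpotent ideal of $\mu_Q$ with quotient $Q^{\times n+1}$.
\end{itemize}
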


We emphasize that the $p = 2$, $n=1$ case of this may be recovered from the computation $K(\underline{\F_2}) \cong K(\F_2)^{\oplus 2}$ of Chan and the author \cite{CW25}. We will see later that $K_1(\underline{\F_3})$ has $3$-torsion when $G = C_3$, so new ideas will be needed to get a full computation when $p > 2$ or $n > 1$. Furthermore, we note that the methods herein are completely distinct from the methods of \cite{CW25}, so that we have two completely different theoretical approaches giving the same computational result (when $p=2$ and $n=1$).

Finally, we are also able to compute $K_0$ for many categories of cohomological $C_{p^n}$-Mackey functors, generalizing the case wherein $Q$ is a field determined by Chan and the author \cite{CW25}. The argument in \cite{CW25} is a few pages of intricate linear algebra, whereas we derive our result as an immediate consequence of our main technical lemma which, when reduced to a slogan, says that $\underline{Q}$ is a nilpotent extension of $Q^{\times n+1}$.

\begin{letterthm}\label{letterthm:B}
    Let $Q$ be an $\F_p$-algebra and $G = C_{p^n}$. We have an isomorphism
    \[ 
        K_0(\underline{Q}) \cong K_0(Q)^{\oplus n+1} 
    \]
    natural in $Q$.
\end{letterthm}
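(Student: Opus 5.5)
Via Yoshida's theorem, the category of $\underline{Q}$-modules (cohomological $C_{p^n}$-Mackey functors over $Q$) is equivalent to modules over a Hecke-type algebra. Concretely, set $A_Q = \mathrm{End}\big(\bigoplus_{k=0}^n \underline{Q}_{C_{p^n}/C_{p^k}}\big)$, the endomorphism algebra of the sum of the free $\underline{Q}$-modules on the orbits. This is a finitely generated projective generator, so $K_0(\underline{Q}) \cong K_0(A_Q)$. I would compute $A_Q$ explicitly. By Yoneda, $\mathrm{Hom}(\underline{Q}_{G/H}, \underline{Q}_{G/K}) = \underline{Q}_{G/K}(G/H)$, which is a free $Q$-module with basis the double cosets $K\backslash G/H$. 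Since $G$ is abelian, these are the composites $\tr\circ c_g\circ\res$. Hence $A_Q \cong A_{\F_p}\otimes_{\F_p} Q$ as $Q$-algebras, naturally in $Q$, and everything reduces to the structure of $A_{\F_p}$.

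The key step, the ``main technical lemma'' in the slogan of the introduction, is to exhibit a two-sided ideal $I \subset A_{\F_p}$ that is nilpotent and satisfies $A_{\F_p}/I \cong \F_p^{\times n+1}$. The natural candidate for $I$ is the span of all basis morphisms except the idempotents $e_k$ (the identity on the $k$-th summand):
\begin{itemize}
    \item every non-identity restriction or transfer between distinct orbits;
    \item on $\underline{Q}_{G/C_{p^k}}$ itself, the endomorphisms $\tr\circ\res$ through smaller subgroups and $c_g - 1$.
\end{itemize}
Cohomologicality ($\tr^K_H\res^K_H = [K:H] = 0$ in characteristic $p$) makes these compositions vanish or degrade. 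In addition, $\F_p[G/C_{p^k}]$ has augmentation ideal generated by $c_g-1$, which is nilpotent because $(c_g-1)^{p^n} = c_g^{p^n}-1 = 0$ in characteristic $p$.

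To prove that $I$ is an ideal and is nilpotent, I would grade basis morphisms by a suitable ``height'' and show that composites of two elements of $I$ land strictly deeper in a finite filtration. One possible measure is the number of subgroup levels passed through, plus the augmentation degree. The relation $\res^K_H\tr^K_H = \sum_{g\in K/H} c_g = N$ in the group algebra helps here: $N$ is a power of $(c_g-1)$, so it lies in $I$. Verifying this filtration argument carefully is the main obstacle. I would first check $n=1$ by hand, where $A$ is $5$-dimensional per $Q$, and then induct on $n$ using the inclusion $C_{p^{n-1}}\subset C_{p^n}$.

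Given the lemma, $I_Q = I\otimes Q$ is a nilpotent ideal of $A_Q$ with $A_Q/I_Q \cong Q^{\times n+1}$. Idempotents lift along nilpotent ideals, and finitely generated projectives are determined up to isomorphism by their reductions. So $K_0(A_Q)\to K_0(Q^{\times n+1}) = K_0(Q)^{\oplus n+1}$ is an isomorphism; this is the standard fact that $K_0$ is invariant under nilpotent extensions. Naturality in $Q$ follows because every identification above is base change of the fixed $\F_p$-algebra map $A_{\F_p}\to\F_p^{\times n+1}$ along $\F_p\to Q$.
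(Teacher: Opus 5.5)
\textbf{Gap: nilpotence of $I$.} Your architecture is the paper's. Via Yoshida, your $A_Q$ is the paper's $\mu_Q$, and your $I$ is the kernel of the composite $\mu_Q\to\nu_Q\to Q^{\times n+1}$. The endgame is also the same: $K_0$ is invariant under nilpotent extensions (Bass), and naturality comes from base change along $\F_p\to Q$.

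However, the one substantive step, that $I$ is a nilpotent ideal, is left open, and the grading you suggest does not work. Take $p=2$, $n=1$. Then $R_e^G$ and $T_e^G$ each cross one level and have augmentation degree $0$, so each has weight one. Their product $R_e^GT_e^G=(1+c_g)R_e^e$ crosses no levels and has augmentation degree one, so it also has weight one. Thus $I\cdot I$ does not land in weight two: mixing restrictions and transfers trades levels for augmentation degree, and your measure need not increase.

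Two minor points. Read literally, the span of the double-coset basis minus the $e_k$ is not an ideal, because the $c_g$ are units in the diagonal blocks; what you mean is the kernel of the blockwise augmentation. Also, for $n=1$ the algebra has dimension $p+3$, not $5$.

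\textbf{The missing idea: split $I$ in two.} This is what the paper does.
\begin{enumerate}
\item Since $G$ is abelian, each $c_g$ is central. For a generator $g$, the two-sided ideal $I_0$ generated by all $c_h-1$ equals $(c_g-1)A_Q$. Hence $I_0^{p^n}=(c_g^{p^n}-1)A_Q=0$.
\item The quotient $A_Q/I_0=\nu_Q$ has basis the $R_H^H$ together with the $R_H^K,T_H^K$ for $H\subsetneq K$. In $\nu_Q$, the double coset formula and cohomologicality give
\[
R_L^KT_H^K=\begin{cases}[K:H]\,R_L^H & L\subseteq H\\ [K:L]\,T_H^L & H\subseteq L\end{cases}
\qquad
T_L^KR_L^H=\begin{cases}[H:L]\,T_H^K & H\subseteq K\\ [K:L]\,R_K^H & K\subseteq H.\end{cases}
\]
So any product of a non-identity restriction with a non-identity transfer, in either order, vanishes.
\item Consequently the span of the non-identity $R$'s and $T$'s is an ideal of $\nu_Q$. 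Its $(n+1)$-st power is zero, since a nonzero product must follow a strictly monotone chain of subgroups of $C_{p^n}$.
\item Your $I$ is the preimage of that ideal, so it is an ideal with $I^{n+1}\subseteq I_0$, and therefore $I^{(n+1)p^n}=0$.
\end{enumerate}
The paper phrases the second step as peeling off maximal-index $R_H^{H'}$ and $T_H^{H'}$ one square-zero ideal at a time, applying Bass at each stage; this is equivalent.

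With this lemma supplied, the rest of your argument (lifting idempotents, injectivity on projectives, base change) goes through unchanged.
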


\subsection{Notation and conventions}

Throughout, $Q$ will always denote a ring in which $p = 0$, and $\F$ is a finite field of characteristic $p$. A reader interested only in \cref{letterthm:A} may safely substitute $Q = \F$ or $Q = \F_p$ everywhere. A module over a Tambara functor is just a module over the underlying Green functor (we also briefly recall the definition explicitly). If $M$ is a Mackey or Tambara functor, we implicitly extend $M$ via the formula $M(X \sqcup Y) = M(X) \times M(Y)$.

\subsection{Acknowledgments}

The author thanks Howard Beck, David Chan, Arun Debray, Bert Guillou, David Mehrle, Marc Gotliboym, Nat Stapleton, and Chuck Weibel for helpful conversations related to the content of this article. Additionally, the author thanks Ben Antieau and Noah Riggenbach for graciously explaining many minor technical aspects of algebraic $K$-theory to the author, and Mike Hill for pointing out that (and how precisely) the idle condition is stronger than the Weyl-invariant condition for non-abelian groups. The author regrets that many of these acknowledgments concern conversations over material that was ultimately revised out of the article while it mutated through the many approaches that turned out to be dead ends.

The author gratefully acknowledges support of NSF grant DMS-2602147.

\subsection{AI Declaration} 

No AI was used for this research.

%% file: MainResults.tex
\section{Algebraic preliminaries: variations on the Mackey algebra}

From here on out, we will fix a prime $p$. The ambient group will be $G = C_{p^n}$ for some $n \geq 1$, so $\underline{Q}$ denotes the constant $C_{p^n}$-Tambara functor with value a fixed $\F_p$-algebra $Q$. The notion of module over (the Green functor underlying a) Tambara functor is reviewed in \cite{CW25}; here is a brief definition.

A $\underline{Q}$-module $M$ is a collection of $Q$-modules $M(G/H)$ for each $H \subset G$, along with the data of:
\begin{enumerate}
    \item a \emph{restriction} $\res_H^L : M(G/L) \rightarrow M(G/H)$ whenever $H \subset L$,
    \item a \emph{transfer} $\tr_H^L : M(G/H) \rightarrow M(G/L)$ whenever $L \subset H$, and 
    \item an action of the group $G/H$ on $M(G/H)$, with $g \in G$ acting by the \emph{conjugation} $c_g$;
\end{enumerate}
satisfying: 
\begin{enumerate}
    \item restrictions and transfers commute with conjugations: $\res_H^L c_g = c_g \res_H^L$ and $\tr_H^L c_g = c_g \tr_H^L$, 
    \item the double coset formula: $\res_H^L \tr_H^L = \sum_{g \in L/H} c_g$, and 
    \item $M$ is cohomological: $\tr_H^L \res_H^L = [L:H] = 0$ (since $p = 0$ in $Q$).
\end{enumerate}
A morphism of $\underline{Q}$-modules $M \rightarrow M'$ is a collection of $Q$-module maps $M(G/H) \rightarrow M'(G/H)$ which commute with all structure maps.

\begin{definition}\label{def:idleness}
    We say a $\underline{Q}$-module $M$ is \emph{idle} if, for each $H \subset G$, the Weyl group $W_G H \cong G/H$ of $H$ acts trivially on $M(G/H)$. We will use $\underline{Q} \text{-} \idle$ to denote the full subcategory of such.
\end{definition}

\begin{warning}
    The reader is cautioned that the naive generalization of the notion of idleness to other groups does not seem correct, at least from the $K$-theoretic perspective. Indeed, one may classify, for example, Nullstellensatzian objects in the category of Weyl-invariant Tambara functors (and the answer is that they are precisely those $G$-Tambara functors $\underline{\F}$ for $\F$ an algebraically closed field). However, there seems to be a slightly stronger notion which is rooted more firmly in foundational aspects of equivariant algebra, and which interacts more nicely with the idle algebra $\nu$. 
    
    For example, as Hill has pointed out to the author, one may consider product-preserving functors out of the Lindner (span) category in the free finite coproduct completion of the category of subgroups of $G$, and take this as a definition of ``idle".
\end{warning}

For each finite group $G$, Th\'{e}venaz and Webb construct an associative ring, the \emph{Mackey algebra} $\mu$ of $G$, whose category of modules is equivalent to the category of Mackey functors \cite[Section 3]{TW95}.

\begin{definition}[{cf. \cite[Section 3]{TW95}}]
    The Mackey algebra $\mu$ is the free associative algebra generated by elements 
    \begin{enumerate}
        \item \emph{transfers} $T_K^H$ for all subgroup inclusions $K \subset H$,
        \item \emph{restrictions} $R_K^H$ for all subgroup inclusions $K \subset H$,
        \item \emph{conjugations} $c_g$ for all $g \in G$,
    \end{enumerate}
    subject to the relations
    \begin{enumerate}
        \item restrictions compose as $R_J^K R_K^H = R_J^H$ whenever $J \subset K \subset H$,
        \item transfers compose as $T_K^H T_J^K = T_J^H$ whenever $J \subset K \subset H$,
        \item conjugations compose as $c_g c_h = c_{gh}$ whenever $g, h \in G$,
        \item restrictions and conjugations compose as $R_{gKg^{-1}}^{gHg^{-1}} c_g = c_g R_K^H$ whenever $K \subset H$ and $g \in G$,
        \item transfers and conjugations compose as $T_{gKg^{-1}}^{gHg^{-1}} c_g = c_g T_K^H$ whenever $K \subset H$ and $g \in G$,
        \item the double coset formula 
                \[ 
                    R_J^H T_K^H = \sum_{x \in J \backslash H / K} T_{J \cap xKx^{-1}}^J c_x R_{xJx^{-1} \cap K}^K
                \]
                holds whenever $J \subset H$ and $K \subset H$, 
        \item trivial restrictions agree with trivial transfers: $R_H^H = T_H^H$ for all $H \subset G$, and
        \item all other as-yet unspecified products of two symbols vanish (e.g. $R_H^{H'} R_L^{L'} = 0$ whenever $L \neq H'$).
    \end{enumerate}
    Note that our $T_K^H$ is Th\'{e}venaz--Webb's $I_K^H$, and Th\'{e}venaz--Webb's $c_{g,H}$ is equal to $c_g R_H^H$. Additionally, assuming relations $(1)$ and $(2)$ above, Th\'{e}venaz--Webb's relation $(0')$ is equivalent to our relation $(7)$.
\end{definition}

The key technical tool for us is an idle variation of this construction.

\begin{construction}\label{cons:idle-algebra}
    Define a finite $G$-set 
    \[ 
        X := \sqcup_{H \subset G} G/H 
    \]
    and consider the functor 
    \[ 
        \ev_X : M \mapsto M(X) \cong \prod_H M(G/H) . 
    \]
    As a functor from Mackey functors to abelian groups, $\ev_X$ is represented by a \emph{free} Mackey functor $F_X$ for formal reasons. Then $\mu$ is recovered as the endomorphism ring of $F_X$. In particular, $\mu$ is given additively as the group $F_X(X)$. Crucially, the equivalence between $\mu$-modules and Mackey functors is given in one direction by $\ev_X$.

    We define a Mackey algebra $\mu_Q$ for $\underline{Q}$-modules, an associative ring whose category of modules is equivalent to the category of $\underline{Q}$-modules. Recall that the box product $\boxtimes$ of Mackey functors is defined using the Day convolution with respect to the tensor product of abelian groups and the coproduct in the Lindner category $\mathrm{Span}(G \text{-} \mathrm{Set}^{\mathrm{fin}})$. Additively, we define
    \[ 
        \mu_{Q} := \ev_X(F_X \boxtimes \underline{Q}) . 
    \]
    
    Let $\mathcal{A}_G$ denote the Burnside Tambara functor. 
    When $Q = \F_p$, the unit $\mathcal{A}_G \rightarrow \underline{\F_p}$ is surjective, hence
    \[ 
        \ev_X(F_X \cong F_X \boxtimes \mathcal{A}_G \rightarrow F_X \boxtimes \underline{\F_p})
    \]
    is a surjection of abelian groups $\mu \rightarrow \mu_{\F_p}$. The kernel of this surjection is precisely the two-sided ideal generated by the relations $p = 0$ and $T_H^L R_H^L = [L:H]$ in $\mu$. Thus $\mu_{\F_p}$ obtains a ring structure. We equip $\mu_{Q}$ with the ring structure determined by the additive isomorphism $\mu_{Q} \cong \mu_{\F_p} \otimes_{\F_p} Q$.

    Next, we define the idle algebra $\nu_{Q}$, whose modules are precisely \emph{idle} $\underline{Q}$-modules. Recall the conjugation elements $c_g$ in the Th\'{e}venaz--Webb Mackey algebra $\mu$, which determine the Weyl actions in a Mackey functor. We will use $c_g$ to denote also their image in $\mu_{Q}$. The idle algebra is defined by 
    \[ 
        \nu_{Q} := \mu_{Q}/(c_g - 1 | g \in G) , 
    \]
    the quotient of $\mu_{Q}$ by the two-sided ideal generated by the (central) elements $c_g-1$.
    It remains to identify $\nu_{Q}$-modules with idle $\underline{Q}$-modules.

    If $M$ is an idle $\underline{Q}$-module, then the $c_g$ act trivially, so the module structure map 
    \[ 
        \mu_{Q} \rightarrow \mathrm{End}_{\mathrm{Ab}}(\ev_X(M)) 
    \] 
    factors uniquely through $\nu_{Q}$. Thus every idle $\underline{Q}$-module may be regarded in a unique way as an $\nu_{Q}$-module. Conversely, given any $\nu_{Q}$-module $N$ classified by a ring map 
    \[ 
        \nu_{Q} \rightarrow \mathrm{End}_{\mathrm{Ab}}(N)
    \] 
    precomposition with the quotient $\mu_{Q} \rightarrow \nu_{Q}$ defines a $\underline{Q}$-module $N'$. The Weyl group actions arise from the scalar action of the $c_g$ on $N$, which are trivial by construction. Thus $N'$ is idle, as desired. These constructions may be seen to be inverse to each other, so the claim follows.
\end{construction}

Let $Q$ be finite. Since $\mu_Q$ is a finite free $Q$-module, the groups $K_i(\underline{Q})$ are finite when $i \geq 1$ (Weibel provides a short proof of this result in \cite[Proposition IV.1.16]{Wei13} and attributes it to Kuku). Next, it is crucial to know that the kernel of $\mu_Q \rightarrow \nu_Q$ is nilpotent.

\begin{lemma}\label{lem:idlization-kernel-is-nilpotent}
    Let 
    \[ 
        I = \mathrm{Ker}(\mu_{Q} \rightarrow \nu_{Q}) . 
    \]
    Then $I$ is a nilpotent ideal.
\end{lemma}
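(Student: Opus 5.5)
The plan is to show that $I$ is generated by a single central nilpotent element; nilpotence of $I$ then follows immediately. Throughout I read $c_g$ as the element $\sum_{H \subset G} c_g R_H^H$ of $\mu$ (or its image in $\mu_Q$), so that $1 = \sum_H R_H^H$ and $c_1 = 1$. This reading is consistent with the description in \cref{cons:idle-algebra} of the $c_g$ as acting by the Weyl actions on $\ev_X(M)$.

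\emph{Step 1: centrality.} Since $G = C_{p^n}$ is abelian, $gKg^{-1} = K$ for every subgroup $K$. Relations (4) and (5) in the definition of $\mu$ therefore become $R_K^H c_g = c_g R_K^H$ and $T_K^H c_g = c_g T_K^H$. Relation (3) gives $c_g c_h = c_{gh} = c_h c_g$. So $c_g$ commutes with every generator of $\mu$, and hence is central in $\mu$. Its image is central in $\mu_{\F_p}$, and then in $\mu_Q \cong \mu_{\F_p} \otimes_{\F_p} Q$, because $Q$ commutes with everything in $\mu_Q$. This justifies the parenthetical ``(central)'' in \cref{cons:idle-algebra}.

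\emph{Step 2: one generator suffices.} Fix a generator $\gamma$ of $G$ and put $x := c_\gamma - 1$. Any $g \in G$ has the form $g = \gamma^k$, so
\[
    c_g - 1 = c_\gamma^k - 1 = x\,(c_\gamma^{k-1} + \dots + c_\gamma + 1).
\]
Hence the two-sided ideal $I$ generated by all $c_g - 1$ equals the two-sided ideal generated by $x$. Since $x$ is central by Step 1, this ideal is simply $I = x\,\mu_Q$. Moreover, any product of $m$ elements of $I$ can be rewritten as $x^m a$ for some $a \in \mu_Q$, again by centrality.

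\emph{Step 3: nilpotence.} Because $p = 0$ in $\mu_Q$ and $x$ commutes with $1$, the Frobenius identity applies:
\[
    x^{p^n} = (c_\gamma - 1)^{p^n} = c_\gamma^{p^n} - 1 = c_{\gamma^{p^n}} - 1 = c_1 - 1 = 0 .
\]
Combining with Step 2, $I^{p^n} = x^{p^n}\mu_Q = 0$, so $I$ is nilpotent.

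The only point that needs real care is the bookkeeping in Step 1: identifying $c_g$ as an honest element of the ring with $c_1 = 1$, and checking that it commutes with the idempotents $R_H^H$ as well as with the restrictions and transfers. Abelianness of $G$ is used there in an essential way, and this is consistent with the warning in the paper about non-abelian groups. Once centrality is established, Steps 2 and 3 are formal.
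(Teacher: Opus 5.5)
Your proof is correct and follows the paper's argument. Both use centrality of the $c_g$, which comes from abelianness of $G$, and then the Frobenius identity $(c_g-1)^{p^n} = c_{g^{p^n}} - 1 = 0$. The only difference is cosmetic: you use cyclicity to reduce to the single central generator $c_\gamma - 1$, which yields the explicit bound $I^{p^n} = 0$, whereas the paper simply notes that an ideal generated by finitely many central nilpotent elements is nilpotent.
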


\begin{proof}
    Since $G$ is abelian, the presentation of $\mu$ in \cite{TW95} implies that each $c_g$ is in the center of the Mackey algebra $\mu$. It follows that $c_g$ is central in $\mu_{\F_p}$, hence in $\mu_{Q}$. Recall that $p = 0$ in $Q$. Since 
    \[ 
        (c_g - 1)^{p^n} = c_{g^{p^n}} - 1 = c_e - 1 = 0 
    \] 
    $I$ is generated by finitely many central nilpotent elements. Consequently $I$ is nilpotent.
\end{proof}

With these algebraic considerations out of the way, we can now compute some $K$-theory.

\section{Reduction to idle \texorpdfstring{$K$}{K}-theory}

\begin{definition}
    The \emph{idle} $K$-theory of $\underline{Q}$ is defined as 
    \[ 
        K^\idle(\underline{Q}) := K(\nu_{Q}) \cong K(\mathrm{Proj}(\underline{Q} \text{-} \idle^{\mathrm{f.g.}}) )
    \]
\end{definition}

\begin{proposition}\label{prop:LES-for-relative}
    There is a fiber sequence of $K$-theory spectra 
    \[ 
        K(\mu_{Q},I) \rightarrow K(\underline{Q}) \rightarrow K^\idle(\underline{Q}) . 
    \]
\end{proposition}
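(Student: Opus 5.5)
This is essentially the definition of relative $K$-theory, so the plan is to identify the middle and right terms with $K$-theory of rings and then invoke the standard fiber sequence. First I identify $K(\underline{Q})$ with $K(\mu_Q)$. By \cref{cons:idle-algebra}, $\ev_X$ gives an equivalence between $\underline{Q}$-modules and $\mu_Q$-modules. An equivalence of abelian categories preserves finitely generated projective objects, so it restricts to an exact equivalence $\mathrm{Proj}(\underline{Q}\text{-}\mathrm{mod}^{\mathrm{f.g.}}) \simeq \mathrm{Proj}(\mu_Q)$. Hence $K(\underline{Q}) \simeq K(\mu_Q)$. By definition we also have $K^\idle(\underline{Q}) = K(\nu_Q)$.

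Under these identifications, the map $K(\underline{Q}) \to K^\idle(\underline{Q})$ is defined as the map induced by the ring surjection $\mu_Q \to \nu_Q = \mu_Q/I$. On projective modules this is base change $P \mapsto \nu_Q \otimes_{\mu_Q} P$. In Mackey terms it sends a $\underline{Q}$-module to its largest idle quotient, namely its coinvariants under the $c_g$.

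Next I define $K(\mu_Q, I)$ as the homotopy fiber of $K(\mu_Q) \to K(\mu_Q/I)$. This is the standard definition of relative $K$-theory of a ring with respect to a two-sided ideal. With that definition, the fiber sequence
\[ K(\mu_Q,I) \rightarrow K(\underline{Q}) \rightarrow K^\idle(\underline{Q}) \]
holds tautologically, and the associated long exact sequence of homotopy groups follows.

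The only substantive point is the one used later, not in this statement. Because $I$ is nilpotent (\cref{lem:idlization-kernel-is-nilpotent}), the map $K_0(\mu_Q) \to K_0(\nu_Q)$ is an isomorphism and $K_1(\mu_Q) \to K_1(\nu_Q)$ is surjective, since idempotents and units lift along nilpotent extensions. So the relative groups vanish in degrees $\le 0$ and the long exact sequence ends cleanly at $K_0$. I would note this here as a remark, citing the standard idempotent-lifting argument (for instance in \cite{Wei13}).

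If the reader prefers a model in which the fiber is identified concretely, for example via Quillen's plus construction applied to $\mathrm{GL}(\mu_Q) \to \mathrm{GL}(\nu_Q)$ or via Waldhausen's relative construction, I would cite the standard comparison. I expect that comparison to be the only mildly technical step.
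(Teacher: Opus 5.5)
Your proposal is correct and is essentially the paper's argument. The paper also just unwinds definitions: it identifies $K(\underline{Q})$ with $K(\mu_Q)$ and $K^{\idle}(\underline{Q})$ with $K(\nu_Q)$, and \emph{defines} $K(\mu_Q,I)$ as the homotopy fiber of the map induced by the surjection $\mu_Q \to \nu_Q$ (citing \cite[IV.1.11]{Wei13}); your extra remarks on the coinvariant description of base change and the vanishing of the relative groups in degrees $\le 0$ are correct but not needed for this statement.
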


\begin{proof}
    Unwinding definitions, this is the fiber sequence of $K$-theory spectra associated to the ring surjection $\mu_{Q} \rightarrow \nu_{Q}$ (cf. \cite[IV.1.11]{Wei13}). In particular, $K(\mu_Q,I)$ is \emph{defined} as the fiber above.
\end{proof}

Next we recall an exercise in Weibel's textbook \cite{Wei13} and a result recorded by Bass \cite{Bas68}.

\begin{theorem}\label{thm:Weibel and Bass}
    Let $I$ be a nilpotent two-sided ideal of an associative ring $S$ such that $p^a I = 0$ for some $a$. Then the relative $K$-theory groups $K_*(S,I)$ are $p$-groups and $K_0(S) \rightarrow K_0(S/I)$ is an isomorphism.
\end{theorem}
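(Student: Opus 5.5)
We treat the two assertions separately: the statement about $K_0$ is a lifting argument for idempotents, and the statement about the higher relative groups is a filtration argument combined with a rational computation.

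\textbf{The claim about $K_0$.} Surjectivity of $K_0(S) \to K_0(S/I)$ follows from lifting idempotents modulo a nilpotent ideal. An idempotent matrix over $S/I$ lifts to an idempotent matrix over $S$, since $M_n(I)$ is nilpotent in $M_n(S)$. Injectivity has two ingredients:
\begin{itemize}
\item Two idempotents $e,f$ over $S$ whose reductions are conjugate are themselves conjugate, using that $1+M_n(I)$ consists of units.
\item A finitely generated projective $P$ with $P/IP=0$ satisfies $P=IP=I^kP=0$.
\end{itemize}
Together these show that $\mathrm{Proj}(S)\to\mathrm{Proj}(S/I)$ is bijective on isomorphism classes. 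So $K_0(S)\cong K_0(S/I)$, and in particular $K_0(S,I)=0$.

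\textbf{Higher relative groups, reduction.} For the assertion that $K_*(S,I)$ consists of $p$-groups, we first reduce to the case $I^2=0$. Filter by powers, $S \to S/I^{k} \to \dots \to S/I$. Each ideal $I^{j}/I^{j+1}\subset S/I^{j+1}$ is square zero and is killed by $p^a$. Relative $K$-theory is additive along such a tower: there are long exact sequences for the triple $S\to S/I^j\to S/I$. Since $p$-groups (of bounded exponent, or at least $p$-primary torsion) are closed under extensions in long exact sequences, we get $K_*(S,I)\otimes \Z[1/p]=0$ once each square-zero step has $p$-torsion relative groups. It is convenient to phrase the goal as $K_*(S,I)[1/p]=0$, which implies all elements are $p$-power torsion.

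\textbf{Higher relative groups, square-zero case.} Here the key input is Goodwillie's theorem: for a nilpotent extension, relative $K$-theory agrees rationally with relative cyclic homology. Relative $HC_*(S,I)$ is a module over $HC_0$ of the relevant $\Z$-algebras, hence is annihilated by $p^a$, so its rationalization vanishes. This gives $K_*(S,I)\otimes\mathbb{Q}=0$.

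Rationally trivial is not yet enough; we also need the torsion prime to $p$ to vanish. For that we use Weibel's result (the exercise cited) that for a nilpotent ideal, $K_*(S;\Z/\ell)\to K_*(S/I;\Z/\ell)$ is an isomorphism for $\ell$ invertible in $S$. An alternative for the square-zero case is Gabber rigidity or Suslin's argument. The catch is that $\ell$ need not be invertible in $S$, only in $S[1/p]$; but $p^aI=0$ suffices via the Gersten--Weibel argument. Combining the rational and mod $\ell$ statements with the long exact sequence of coefficients, we get $K_*(S,I)[1/p]=0$.

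\textbf{Expected obstacle.} The main obstacle is this prime-to-$p$ vanishing without assuming $S$ is a $\Z[1/\ell]$-algebra. I would first try the direct route: $K_*(S,I)$ is a module over $K_0(\Z)$ acting through $\Z/p^a$ in low degrees. If that fails, I would use the Weibel exercise's intended approach, namely that relative $K$-groups of a nilpotent ideal of exponent $p^a$ are $p$-groups via the Goodwillie/Weibel nilpotence argument with $\Z/\ell$ coefficients.
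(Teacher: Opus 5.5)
\textbf{Verdict: genuine gap.} You never prove that $K_*(S,I)$ has no torsion prime to $p$, and that is the real content of the first assertion.

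Your $K_0$ argument is fine. It is essentially Bass's proof, which the paper cites: lift idempotents and units modulo the nilpotent ideal, then use Nakayama.

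\textbf{Where the gap is.} The rigidity statement you invoke is $K_*(S;\Z/\ell)\cong K_*(S/I;\Z/\ell)$ for nilpotent $I$. It is stated under the hypothesis $1/\ell\in S$. As you note yourself, that hypothesis can fail here, e.g.\ for $S=\Z[\epsilon]/(\epsilon^2,p\epsilon)$ with $I=(\epsilon)$. Gabber rigidity has the same hypothesis.
\begin{enumerate}
\item Your remedy, ``the Gersten--Weibel argument'', is only a name, not an argument.
\item The ``direct route'' you propose first is false. $K_0(\Z)=\Z$ acts on $K_n(S,I)$ by ordinary integer multiplication, and nothing forces $p^a$ to act by zero. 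In general it does not: for $p\ge 5$, Evens--Friedlander give $K_3(\Z/p^2,(p))\cong\Z/p^2$, although $p\cdot(p)=0$.
\item Your final fallback, ``the Weibel exercise's intended approach'', is just the result you are asked to prove.
\end{enumerate}

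\textbf{The missing idea.} It is elementary, and it also makes Goodwillie's theorem unnecessary. Write $M(J)$ for finite matrices with entries in $J$.
\begin{enumerate}
\item Because $I$ is nilpotent, $GL(S)\to GL(S/I)$ is surjective, with kernel $GL(I)=1+M(I)$.
\item $GL(I)$ is filtered by the normal subgroups $1+M(I^k)$. The map $1+x\mapsto x$ identifies the successive quotients with the additive groups $M(I^k/I^{k+1})$.
\item These quotients are abelian and killed by $p^a$, hence $\Z[1/p]$-acyclic. Hochschild--Serre and induction give $\tilde H_*(GL(I);\Z[1/p])=0$.
\item Hochschild--Serre for $1\to GL(I)\to GL(S)\to GL(S/I)\to 1$ then gives $H_*(GL(S);\Z[1/p])\cong H_*(GL(S/I);\Z[1/p])$.
\item The spaces $BGL(-)^+$ are simple, so $K_n(S)[1/p]\to K_n(S/I)[1/p]$ is an isomorphism for $n\ge1$.
\item Together with your $K_0$ statement, this gives $K_*(S,I)[1/p]=0$. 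It handles the rational and prime-to-$p$ parts at once.
\end{enumerate}
Equivalently, Weibel's mod-$\ell$ argument only uses that $I$ is uniquely $\ell$-divisible as an abelian group. That holds because $I$ is a $\Z/p^a$-module, so $1/\ell\in S$ is not needed. This is the content of the Weibel exercise the paper cites. In the paper's applications $S$ is an $\F_p$-algebra, so your quoted result would apply verbatim there, but the statement is for arbitrary $S$.

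\textbf{The Goodwillie step.} It is valid but heavy. Its justification should be that $I\otimes\mathbb{Q}=0$, so $S\otimes\mathbb{Q}\to(S/I)\otimes\mathbb{Q}$ is an isomorphism. You have not shown that relative $HC$ over $\Z$ is killed by $p^a$, and you don't need it.
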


\begin{proof}
    The first claim is \cite[Exercise IV.1.18]{Wei13}. The second claim on $K_0$ may be found as \cite[Proposition 1.3 in Chapter IX]{Bas68}.
\end{proof}

Applying \cref{thm:Weibel and Bass} to $\mu_Q \rightarrow \nu_Q$ yields the following.

\begin{theorem}\label{thm:rel-K-groups-are-p-torsion;inverting-p-is-free-and-easy-and-leads-to-a-happier-paradise}
    The relative $K$-theory groups 
    \[ 
        K_*(\mu_{Q},I) 
    \] 
    are $p$-groups. Consequently 
    \[ 
        K(\underline{Q})[p^{-1}] \rightarrow K^\idle(\underline{Q})[p^{-1}] 
    \] 
    is an equivalence of commutative ring spectra. Moreover 
    \[ 
        K_0(\underline{Q}) \rightarrow K_0^{\idle}(\underline{Q}) 
    \] 
    is an isomorphism.
\end{theorem}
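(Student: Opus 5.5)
The plan is to apply \cref{thm:Weibel and Bass} with $S = \mu_Q$ and $I = \mathrm{Ker}(\mu_Q \rightarrow \nu_Q)$. For this we must check its two hypotheses.

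\emph{Nilpotence of $I$.} This is exactly \cref{lem:idlization-kernel-is-nilpotent}.

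\emph{Exponent of $I$.} Since $p = 0$ in $Q$, the ring $\mu_Q \cong \mu_{\F_p} \otimes_{\F_p} Q$ is an $\F_p$-algebra. Hence $pI = 0$, and we may take $a = 1$.

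\cref{thm:Weibel and Bass} then gives two conclusions. First, the groups $K_*(\mu_Q, I)$ are $p$-groups. Second, $K_0(\mu_Q) \rightarrow K_0(\nu_Q)$ is an isomorphism. Since $K(\underline{Q}) = K(\mu_Q)$ by the Th\'{e}venaz--Webb equivalence and $K^\idle(\underline{Q}) = K(\nu_Q)$ by definition, the second conclusion is precisely the final claim about $K_0$.

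For the statement after inverting $p$, use the fiber sequence of \cref{prop:LES-for-relative}. Inverting $p$ is smashing localization, so it is exact, and
\[
K(\mu_Q,I)[p^{-1}] \rightarrow K(\underline{Q})[p^{-1}] \rightarrow K^\idle(\underline{Q})[p^{-1}]
\]
is again a fiber sequence. Its homotopy groups are $\pi_*(-)\otimes \Z[p^{-1}]$. Every element of a $p$-group is killed by a power of $p$, so $K_*(\mu_Q,I)\otimes\Z[p^{-1}] = 0$. The long exact sequence then shows the right-hand map is a $\pi_*$-isomorphism, hence an equivalence of spectra.

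\emph{Multiplicative structure.} The map $\mu_Q \rightarrow \nu_Q$ is induced by $\underline{Q}$-modules $\rightarrow$ idle $\underline{Q}$-modules. One would like it to respect the box product and carry $\underline{Q}$ to itself, so that the map is one of commutative ring spectra. I expect this to be the main point needing care. The quotient kills $c_g - 1$, and $\underline{Q}$ itself is idle. So what must be checked is that idlization $M \mapsto \nu_Q \otimes_{\mu_Q} M$ is symmetric monoidal on projectives, with the box product of idle projectives idle.

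An equivalence of spectra that underlies a map of commutative ring spectra is automatically an equivalence of commutative ring spectra. So once the map is known to be multiplicative, no further argument is needed. If symmetric monoidality proves awkward, I would first prove the conclusion only as an equivalence of spectra, and then upgrade using the identification $K(\mu_Q)\simeq K(\underline{Q})$ as a ring under box product.
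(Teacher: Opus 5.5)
Your handling of the $p$-group claim, the $K_0$ isomorphism, and the equivalence of underlying spectra after inverting $p$ is correct. It is the paper's argument: the paper's proof is just ``apply \cref{thm:Weibel and Bass} to $\mu_Q\to\nu_Q$'' (with nilpotence from \cref{lem:idlization-kernel-is-nilpotent}). You have additionally made explicit the exponent check $pI=0$ and the localization of the fiber sequence of \cref{prop:LES-for-relative}.

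The gap is the multiplicative clause, and the check you propose would fail. Idlization $L=\nu_Q\otimes_{\mu_Q}(-)$ is not symmetric monoidal for the box product, already on projectives.

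\emph{Counterexample.} Take $G=C_p$, $Q=\F_p$, and let $F_e$ be the free $\underline{\F_p}$-module on $C_p/e$ (bottom level $\F_p[C_p]$, top level $\F_p\cdot\sum_g g$). Then $A:=L(F_e)$ has $A(C_p/e)=\F_p\cdot u$ with trivial action, $A(C_p/C_p)=\F_p\cdot\tr(u)$, and $\res=0$.
\begin{itemize}
\item For any $\underline{\F_p}$-module $M$, both $\mathrm{Hom}(A,M)$ and the Dress pairings $A\times A\to M$ are naturally $M(C_p/e)^{C_p}$. A pairing must vanish on the top level, since $\beta_{C_p}(\tr u,t)=\tr\beta_e(u,\res t)=0$. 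Hence $A\boxtimes A\cong A$.
\item But $F_e\boxtimes F_e\cong F_{C_p/e\times C_p/e}\cong F_e^{\oplus p}$, so $L(F_e\boxtimes F_e)\cong A^{\oplus p}$.
\item On $\pi_0$, $[F_e]^2=p[F_e]$ maps to $p[A]$, while $[A]^2=[A]$. These differ in the free module $K_0^{\idle}(\underline{\F_p})[p^{-1}]$.
\end{itemize}
So with box-product structures on both sides, the map is not multiplicative even after inverting $p$.

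No other monoidal structure on idle projectives can fix this. Let $\phi_g$ be the automorphism of $F_e$ induced by $x\mapsto xg$ on $C_p/e$. Then $L(\phi_g)=\mathrm{id}_A$, which is just $c_g\mapsto 1$. Yet for $g\neq e$, $L(\phi_g\boxtimes\mathrm{id}_{F_e})$ is a nontrivial cyclic permutation of the summands of $A^{\oplus p}$. Hence no bifunctor on idle projectives can make $L$ monoidal.

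Your fallback, transporting the ring structure across the additive equivalence, makes the clause tautological rather than proving it. The paper's one-line proof also only produces an equivalence of spectra. What is actually established, by you and by the paper, is the additive statement. The phrase ``of commutative ring spectra'' would need a specified ring structure on $K^{\idle}(\underline{Q})$ for which the map is multiplicative, and the natural box-product structure is not one.
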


\begin{remark}
    One can squeeze out a little more generality from our methods so far. Let $R$ be any $C_p$-Tambara functor such that each Weyl group acts trivially and $p = 0$ in $R(C_p/e)$. For instance the quotient of the Burnside Tambara functor by the ideal generated by $p$ in the $C_{p^n}/e$-level has this form; it is also known as $N_e^{C_p} \F_p$ and it satisfies $N_e^{C_p} \F_p(C_p/C_p) \cong \Z/p^2$. Then one can again construct the Mackey and idle algebras $\mu_R$ and $\nu_R$ of $R$. The same argument shows that $K(R) \rightarrow K^\idle(R)$ becomes an equivalence after inverting $p$, and that $K_0(R) \cong K_0^\idle(R)$. In this case one should obtain $K(R)[p^{-1}] \cong K(R(C_p/e) \times R(C_p/C_p))[p^{-1}]$ (or---possibly---one should replace $R(C_p/C_p)$ with the geometric fixed points $R(C_p/C_p)/\tr$ of $R$).
\end{remark}

\section{Computation of idle \texorpdfstring{$K$}{K}-theory}

We begin with an algebraic preliminary which forms the technical heart of this entire paper.

\begin{lemma}\label{lem:reduce-to-product-using-nilpotence}
    There is an associative ring map $\nu_Q \rightarrow Q^{\times n+1}$ given as a finite composition of ring maps $S \rightarrow S'$ with nilpotent kernel.
\end{lemma}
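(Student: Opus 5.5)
The plan is to make the structure of $\nu_Q$ completely explicit, and then to kill first the transfers and then the restrictions. Each of these two quotients will have nilpotent kernel, and what remains is $Q^{\times n+1}$.

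\textbf{Step 1: a basis for $\nu_Q$.} Write $e = H_0 \subset H_1 \subset \dots \subset H_n = G$ for the subgroups of $C_{p^n}$, and $T_a^j$, $R_a^i$ for $T_{H_a}^{H_j}$, $R_{H_a}^{H_i}$. The Th\'{e}venaz--Webb presentation gives a $\Z$-basis of $\mu$ by elements $T_K^H c_g R_K^L$, and for abelian $G$ the conjugations commute past everything. Killing $c_g - 1$, and passing to $\mu_Q$ and then $\nu_Q$, I expect $\nu_Q$ to be a free $Q$-module with basis
\[
T_a^j R_a^i, \qquad a \le \min(i,j).
\]
For $a=i=j$ this element is the idempotent $e_i = R_{H_i}^{H_i}$.

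I would then compute products with the double coset formula. For $k,l<j$, idleness makes every $c_x$ act as $1$. Hence $R_k^j T_l^j$ equals $[H_j : H_{\max(k,l)}]$ copies of a single $T R$ term, which vanishes because $p=0$. The products that survive are only those in which one of the inner maps is trivial:
\begin{itemize}
\item $(T_a^j R_a^i)(T_b^i R_b^m)$ is zero unless $a = i$ or $b = i$;
\item when $a=i$ it equals $T_b^j R_b^m$, and when $b=i$ it equals $T_a^j R_a^m$.
\end{itemize}
Checking this multiplication table carefully, including that the basis really descends to $\nu_Q$ without extra collapse, is the main obstacle. The rest is bookkeeping.

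\textbf{Step 2: kill the transfers.} Let $J_1$ be the $Q$-span of the basis elements $T_a^j R_a^i$ with $a<j$. By the multiplication rule, $J_1$ is a two-sided ideal. Assign each basis element its \emph{transfer length} $j-a$. In a nonzero product, transfer lengths add: if $a=i$ the length becomes $(j-i)+(i-b)$, and if $b=i$ the right factor contributes length $0$. Elements of $J_1$ have length at least $1$ and all lengths are at most $n$. Therefore $J_1^{n+1}=0$, so $\nu_Q \to \nu_Q/J_1$ has nilpotent kernel.

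\textbf{Step 3: kill the restrictions.} The quotient $\nu_Q/J_1$ has basis the $R_a^i$, multiplying by composition, with $R_a^i R_{i'}^m = 0$ for $i \neq i'$. Let $J_2$ be the span of the $R_a^i$ with $a<i$. Restriction lengths add under nonzero composition, so $J_2^{n+1}=0$. The quotient $(\nu_Q/J_1)/J_2$ is spanned by orthogonal idempotents $e_0,\dots,e_n$ with coefficients in $Q$, i.e.\ it is $Q^{\times n+1}$. The composite $\nu_Q \to \nu_Q/J_1 \to Q^{\times n+1}$ is the desired ring map.
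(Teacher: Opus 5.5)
Your route is sound and differs from the paper's, but Step 1 contains a false claim. When you passed to $\mu_Q$ you dropped the cohomological relation. For $a<c:=\min(i,j)$ one has $T_a^jR_a^i = T_c^j\,(T_a^cR_a^c)\,R_c^i = [H_c:H_a]\,T_c^jR_c^i = 0$. So your proposed basis is only a spanning set. In fact $\nu_Q$ is free on the $(n+1)^2$ elements $T_i^j$ ($i\le j$) and $R_j^i$ ($j\le i$). For $n=1$ your list has five elements, whereas the paper's explicit $\nu_{\F_3}$ is $4$-dimensional.

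This does no real damage. Your product rule is a formal consequence of the relations; it uses only idleness and $p=0$ in the double coset formula. So the length-additivity arguments in Steps 2 and 3 go through verbatim with a spanning set. The one place you actually used independence is the final sentence. There, ``spanned by orthogonal idempotents $e_0,\dots,e_n$'' only gives a quotient of $Q^{\times n+1}$. With the corrected basis, the kernel of the composite $\nu_Q\to(\nu_Q/J_1)/J_2$ is exactly the span of the proper $T_i^j$ and $R_j^i$, so the quotient is free on $e_0,\dots,e_n$ as claimed.

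Compared with the paper: the paper never writes down a basis of $\nu_Q$. It peels off one generator at a time, choosing a proper $R_H^{H'}$ or $T_H^{H'}$ of maximal index. Maximality, together with $\res\tr=0$ (idleness) and $\tr\res=0$ (cohomological condition), shows that this generator spans a one-dimensional square-zero ideal. This gives a chain of square-zero quotients, one per proper transfer or restriction.

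You instead compute the whole multiplication table once and use transfer/restriction length as a grading. That needs only two quotients, each with kernel of nilpotency index at most $n+1$. Since \cref{thm:Weibel and Bass} only asks for nilpotent kernels killed by a power of $p$, this is equally good.

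Your route also gives an explicit description of $\nu_Q$. Once corrected, it shows that even one step suffices, because mixed products vanish. We have $R_a^iT_b^i=0$ for $a,b<i$ by idleness, and $T_a^jR_a^m=0$ for $a<\min(j,m)$ by the cohomological relation. Hence the span of all proper $T_i^j$ and $R_j^i$ is already a two-sided ideal whose $(n+1)$-st power vanishes. The paper's argument is more local and avoids verifying a basis, at the cost of a longer induction.
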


\begin{proof}
    We inductively quotient out by transfer and restriction elements in $\nu_Q$, starting with the biggest transfers and restrictions. The base case illustrates the idea: let $J$ is the two-sided ideal of $\nu_Q$ generated by $R_e^{C_{p^n}}$. One checks using Th\'{e}venaz--Webb presentation of the Mackey algebra, and also the cohomological assumption and the fact that the elements $c_g - 1$ are zero in $\nu_{Q}$, that $J = Q \cdot R_e^{C_{p^n}}$. Since $R_e^{C_{p^n}}$ squares to zero, $J^2 = 0$. Thus $\nu_Q \rightarrow \nu_Q/(R_e^{C_{p^n}})$ is the first map in our composition. A similar situation occurs for the ideal of $\nu_Q/(R_e^{C_{p^n}})$ generated by (the image of) $R_e^{C_{p^n}}$. Note that $Q^{\times n+1}$ is the subalgebra of $\nu_Q$ generated by all the idempotent elements $R_H^H$. 

    In the inductive step we have some quotient ring $S$ of $\nu_Q$ for which the quotient is a composition of quotients with nilpotent kernels. Choose a subgroup pair $H \subset H'$ of maximal index $|H'/H|$ among those pairs for which there is either a nonzero $R_H^{H'}$ or $T_{H'}^H$ in $S$. Of course, there are only finitely many such elements, since there are only finitely many such elements in the Mackey algebra $\mu$. If the maximal index is $1$, then $S$ is the subalgebra $Q^{\times n+1}$ of $\nu_Q$ generated by the elements $R_H^H$, in which case the induction terminates. Otherwise $H \neq H'$, then we have some $R_H^{H'}$ or $T_{H'}^H$ nonzero. 

    Recall that the product $R_H^{H'} R_L^{L'}$ is zero in $\mu_Q$, hence in our quotient, unless $H = L'$, in which case the product is the image of $R_L^{H'}$ in our quotient. Note $L \subset H \subset H'$, so $|H'/H| \geq |H'/L|$, and equality holds if and only if $L = H$. Since the index $|H'/H|$ is maximal in the sense of the previous paragraph (and because $\underline{Q}$-modules always satisfy $\tr_H^{H'} \res_H^{H'} = 0 = \res_H^{H'} \tr_H^{H'}$), the only nonzero product of $R_H^{H'}$ with another algebra generator $R_L^{L'}$ or $T_L^{L'}$ occurs when $L = L'$, in which case $L = H$ or $L' = H'$ and the product is equal to $R_H^{H'}$. Consequently the ideal $J$ generated by $R_H^{H'}$ is just $Q \cdot R_H^{H'}$, which is again nilpotent. The same argument goes through verbatim with $R_H^{H'}$ replaced by $T_H^{H'}$. This produces $S \rightarrow S/J =: S'$, a ring surjection with nilpotent kernel. We return to the inductive step, replacing $S$ with $S'$. This process reduces the $Q$-rank of $S$ by at least one, hence terminates.
\end{proof}

\begin{remark}
    The map $\mu_Q \rightarrow \nu_Q \rightarrow Q^{\times n+1}$ is split by the map sending the primitive idempotents of $Q^{\times n+1}$ to the idempotents $R_H^H$ of $\mu_Q$. Consequently $K(Q)^{\times n+1}$ is a retract of $K(\underline{Q})$ and of $K^\idle(\underline{Q})$.
\end{remark}

Combining \cref{lem:idlization-kernel-is-nilpotent,lem:reduce-to-product-using-nilpotence,thm:Weibel and Bass} we deduce \cref{letterthm:B}. Using Quillen's computation of $K(\F)$ \cite{Qui72b}, we obtain \cref{letterthm:A} from the special case $Q = \F$ of \cref{thm:main}:

\begin{theorem}\label{thm:main}
    Consider map $\nu_Q \rightarrow Q^{\times n+1}$ of \cref{lem:reduce-to-product-using-nilpotence}. The induced map 
    \[ 
        K^\idle(\underline{Q})[p^{-1}] \rightarrow K(Q^{\times n+1})[p^{-1}] \cong K(Q)[p^{-1}]^{\times n+1}
    \] 
    is an equivalence and the induced map 
    \[ 
        K_0^\idle(\underline{Q}) \rightarrow K_0(Q^{\times n+1})
    \] 
    is an isomorphism.
\end{theorem}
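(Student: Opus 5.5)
The plan is to reduce \cref{thm:main} to repeated applications of \cref{thm:Weibel and Bass}, once for each step of the factorization from \cref{lem:reduce-to-product-using-nilpotence}. That lemma writes $\nu_Q \rightarrow Q^{\times n+1}$ as a finite composite
\[
\nu_Q = S_0 \rightarrow S_1 \rightarrow \cdots \rightarrow S_m = Q^{\times n+1},
\]
where each $S_{k} \rightarrow S_{k+1}$ is a surjection of associative rings whose kernel $J_k$ is a nilpotent two-sided ideal. Since $K(-)[p^{-1}]$ is a functor and equivalences compose, and likewise isomorphisms on $K_0$ compose, it suffices to prove the two claims for a single step $S_k \rightarrow S_{k+1}$.

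For a single step, I would first check the hypotheses of \cref{thm:Weibel and Bass}. Nilpotence of $J_k$ is provided by \cref{lem:reduce-to-product-using-nilpotence}. The torsion condition holds because each $S_k$ is a quotient of $\nu_Q$, which is a quotient of $\mu_Q \cong \mu_{\F_p} \otimes_{\F_p} Q$, and $p = 0$ in $\mu_Q$. Hence $pJ_k = 0$, so we may take $a = 1$. Applying \cref{thm:Weibel and Bass} to $S_k$ and $J_k$ then gives two conclusions:
\begin{itemize}
\item[(i)] $K_0(S_k) \rightarrow K_0(S_{k+1})$ is an isomorphism;
\item[(ii)] the relative groups $K_*(S_k, J_k)$ are $p$-groups.
\end{itemize}

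Now I would use the fiber sequence $K(S_k,J_k) \rightarrow K(S_k) \rightarrow K(S_{k+1})$ attached to a ring surjection (cf. \cite[IV.1.11]{Wei13}, as in \cref{prop:LES-for-relative}). Inverting $p$ is exact, so it preserves this fiber sequence. Each homotopy group of $K(S_k,J_k)[p^{-1}]$ is $K_i(S_k,J_k)\otimes\Z[p^{-1}]$, which vanishes because every element of a $p$-group is killed by a power of $p$. So $K(S_k,J_k)[p^{-1}] \simeq 0$, and therefore $K(S_k)[p^{-1}] \rightarrow K(S_{k+1})[p^{-1}]$ is an equivalence. Composing over $k$ gives the first claim.

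For the target, I would identify $K(Q^{\times n+1}) \simeq K(Q)^{\times n+1}$ using additivity of $K$-theory on finite products of rings, since the module category of a product ring is the product of the module categories. The $K_0$ statement then follows by composing the isomorphisms from (i).

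The main obstacle is the scope of the cited facts. \cref{thm:Weibel and Bass} is stated for the relative groups defined via this same fiber sequence, and that convention has to be used consistently; this matches the definition in \cref{prop:LES-for-relative}. I also need that ``$p$-group'' means $p$-power torsion, not necessarily finite, so that the groups die after inverting $p$; this holds in either reading. Finally, the relative sequence is usually stated for connective $K$-theory, and in negative degrees I would rely on the same nilpotent-invariance. Since the theorem concerns only $K$ as used throughout the paper, I expect no further difficulty.
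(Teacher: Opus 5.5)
Your proposal is correct and matches the paper's proof, which simply applies \cref{thm:Weibel and Bass} to each nilpotent-kernel quotient in the factorization of \cref{lem:reduce-to-product-using-nilpotence}. You fill in the routine details the paper leaves implicit: $p$ kills each kernel because every ring involved is an $\F_p$-algebra, and the relative term in each fiber sequence has $p$-group homotopy, so it vanishes after inverting $p$.
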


\begin{proof}
    Apply \cref{thm:Weibel and Bass} successively to the conclusion of \cref{lem:reduce-to-product-using-nilpotence}.
\end{proof}

Although we do not use it here, we observe that we are in the situation of the Dundas--Goodwillie--McCarthy theorem \cite{DGM13}, hopeful that it may be of future use. In particular, it suggests a way to approach the ``at $p$" part of $K(\underline{\F})$---by computing $TC(\mu_\F)$.

\begin{proposition}\label{prop:DGM}
    If we define $TC(\underline{Q}) := TC(\mu_{Q})$ and $TC^\idle(\underline{Q}) := TC(\nu_{Q})$, then there is a commutative diagram
    \[ \begin{tikzcd}
        K(\underline{Q}) \arrow[r] \arrow[d] & K^\idle(\underline{Q}) \arrow[d] \arrow[r] & 
        K(Q^{\times n+1}) \arrow[d] \\
        TC(\underline{Q}) \arrow[r] & TC^\idle(\underline{Q}) \arrow[r] & TC(Q^{\times n+1})
    \end{tikzcd} \]
    of spectra in which all squares are pullbacks.
\end{proposition}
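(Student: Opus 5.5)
The plan is to apply the Dundas--Goodwillie--McCarthy theorem \cite{DGM13} to each of the two ring surjections $\mu_Q \to \nu_Q$ and $\nu_Q \to Q^{\times n+1}$, and then paste the resulting squares. DGM says that for a map of connective ring spectra (in particular of discrete associative rings) which is surjective on $\pi_0$ with nilpotent kernel, the square
\[
\begin{tikzcd}
K(S) \arrow[r] \arrow[d] & K(S/J) \arrow[d] \\
TC(S) \arrow[r] & TC(S/J)
\end{tikzcd}
\]
formed by the cyclotomic trace is a pullback of spectra. The vertical maps in the diagram of \cref{prop:DGM} are these traces, with $TC(\underline{Q})$ and $TC^\idle(\underline{Q})$ defined so that they are literally $TC$ of the rings $\mu_Q$ and $\nu_Q$. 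Commutativity of the diagram is then immediate from naturality of the cyclotomic trace $K \to TC$ in ring maps.

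For the left square, \cref{lem:idlization-kernel-is-nilpotent} says that $\mu_Q \to \nu_Q$ is surjective with nilpotent kernel $I$, so DGM applies directly. For the right square, \cref{lem:reduce-to-product-using-nilpotence} factors $\nu_Q \to Q^{\times n+1}$ as a finite composite
\[
\nu_Q = S_0 \to S_1 \to \cdots \to S_m = Q^{\times n+1},
\]
where each map is a surjection with nilpotent (indeed square-zero) kernel. DGM then gives a pullback square for each step $S_i \to S_{i+1}$. Stacking these horizontally and using the pasting lemma for pullbacks, the composite square for $\nu_Q \to Q^{\times n+1}$ is a pullback. Since $K$ and $TC$ are functors, the composite of the traces and of the horizontal maps agrees with the square induced directly by the ring map $\nu_Q \to Q^{\times n+1}$. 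The outer rectangle of the diagram in \cref{prop:DGM} is then also a pullback, again by pasting, though only the two individual squares are claimed.

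The only step requiring care is checking the hypotheses of DGM in the form being invoked. That theorem is usually stated for maps of connective $\mathbb{S}$-algebras $A \to B$ with $\pi_0 A \to \pi_0 B$ surjective with nilpotent kernel. Here all rings are discrete and the maps are surjective, since they are quotient maps by construction in \cref{lem:reduce-to-product-using-nilpotence}. Nilpotence of each kernel is supplied by the earlier lemmas; note that no finiteness of $Q$ is needed. I expect no real obstacle beyond making sure that each intermediate quotient $S_i$ in \cref{lem:reduce-to-product-using-nilpotence} is an honest ring quotient, so that each step is genuinely a surjective ring map with nilpotent kernel. The lemma's proof ensures this, since each step quotients by a two-sided ideal $J = Q \cdot R_H^{H'}$ or $Q \cdot T_H^{H'}$ with $J^2 = 0$.
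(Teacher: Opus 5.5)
Your proposal is correct and is essentially the argument the paper has in mind: the paper gives no separate proof, only the remark that $\mu_Q \to \nu_Q \to Q^{\times n+1}$ are nilpotent extensions by \cref{lem:idlization-kernel-is-nilpotent,lem:reduce-to-product-using-nilpotence}, so the Dundas--Goodwillie--McCarthy theorem applies to each step and the resulting squares paste. The pasting over the intermediate quotients is optional, since a composite of surjections with nilpotent kernels again has nilpotent kernel and DGM could be applied to $\nu_Q \to Q^{\times n+1}$ directly.
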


Note that our $TC$ is just $TC$ of an associative ring. By contrast, recent work of Chan--Gerhardt--Klang \cite{CGK25} and Hilman--Ramzi \cite{HR26} establish candidates for $G$-equivariant $THH$ which are hoped to lead to a useful $G$-spectrum definition of $TC$ (when the input is a ring $G$-spectrum or Tambara functor); see also forthcoming thesis work of Gotliboym introducing equivariant $TC$. We note moreover that the $C_p$-equivariant topological Hochschild homology groups (in the sense of Chan--Gerhardt--Klang) of $\underline{\F_p}$ have been computed by \cite{AGHKK22} when $p = 2$ and \cite{CGKW26} when $p$ is odd.

\section{\texorpdfstring{An explicit computation of $K_1$}{An explicit computation of K\_1}}

Consider $\underline{\F_3}$ when $G = C_3$. Recently Hofmann developed a part of the OSCAR package for Julia \cite{OSCAR,Nemo_Hecke} which computes $K_1$ of finite rings \cite{Hof25}.
Using the input
\begin{lstlisting}
julia> R, = finite_ring([3, 3, 3, 3, 3, 3], [ZZ[1 0 0 0 0 0; 0 0 0 0 0 0; 0 0 1 0 0 0; 0 0 0 0 0 0; 0 -1 0 0 1 0; 0 -1 0 0 0 1], ZZ[0 0 0 0 0 0; 0 1 0 0 0 0; 0 0 0 0 0 0; 0 0 0 1 0 0; 0 1 0 0 0 0; 0 1 0 0 0 0], ZZ[0 0 0 0 0 0; 0 0 1 0 0 0; 0 0 0 0 0 0; 0 0 0 0 0 0; 0 0 1 0 0 0; 0 0 1 0 0 0], ZZ[0 0 0 1 0 0; 0 0 0 0 0 0; 0 0 0 0 0 0; 0 0 0 0 0 0; 0 0 0 1 0 0; 0 0 0 1 0 0], ZZ[0 -1 0 0 1 0; 0 1 0 0 0 0; 0 0 1 0 0 0; 0 0 0 1 0 0; 0 0 0 0 0 1; 1 1 0 0 0 0], ZZ[0 -1 0 0 0 1; 0 1 0 0 0 0; 0 0 1 0 0 0; 0 0 0 1 0 0; 1 1 0 0 0 0; 0 0 0 0 1 0]]);
julia> Oscar.k1(R)
\end{lstlisting}
for $K_1(\underline{\F_3})$ and 
\begin{lstlisting}
julia> R, = finite_ring([3, 3, 3, 3], [ZZ[1 0 0 0; 0 0 0 0; 0 0 1 0; 0 0 0 0], ZZ[0 0 0 0; 0 1 0 0; 0 0 0 0; 0 0 0 1], ZZ[0 0 0 0; 0 0 1 0; 0 0 0 0; 0 0 0 0], ZZ[0 0 0 1; 0 0 0 0; 0 0 0 0; 0 0 0 0]]);
julia> Oscar.k1(R)
\end{lstlisting}
for $K_1^\idle(\underline{\F_3})$, we compute 
\begin{align*}
    K_1(\underline{\F_3}) & \cong (\Z/2)^2 \times \Z/3 \\
    K_1^\idle(\underline{\F_3}) & \cong (\Z/2)^2 . 
\end{align*}

From this we draw two observations. First, this is consistent with all of our results. Since the input data is merely an explicit description of the idle and Mackey algebras, which have dimension $4$ and $6$ over $\F_3$ respectively, we are somewhat reassured that there is no catastrophic error in our theoretical methods. Second, we obtain a new result: $K_1(\mu_{\F_3},I) = \Z/3$ is not zero in general (and $K_1(\nu_{\F_3} \rightarrow \F_3^2) = 0$), contrary to the expectations of the authors of \cite{CW25}. 

Lastly, motivated by the computation of $K_1^\idle(\underline{\F_3})$, we conjecture $K^\idle(\underline{Q}) \cong K(Q)^{\times n+1}$.

%% file: ref.bib
@inproceedings{Nemo_Hecke,
    author = {Fieker, Claus and Hart, William and Hofmann, Tommy and Johansson, Fredrik},
    title = {Nemo/{H}ecke: Computer Algebra and Number Theory Packages for the {J}ulia Programming Language},
    year = {2017},
    isbn = {9781450350648},
    publisher = {Association for Computing Machinery},
    address = {New York, NY, USA},
    url = {https://doi.org/10.1145/3087604.3087611},
    doi = {10.1145/3087604.3087611},
    booktitle = {Proceedings of the 2017 ACM International Symposium on Symbolic and Algebraic Computation},
    pages = {157–164},
    numpages = {8},
    location = {Kaiserslautern, Germany},
    series = {ISSAC '17}
}

@article {AGHKK22,
    AUTHOR = {Adamyk, Katharine and Gerhardt, Teena and Hess, Kathryn and Klang, Inbar and Kong, Hana Jia},
     TITLE = {Computational tools for twisted topological {H}ochschild
              homology of equivariant spectra},
   JOURNAL = {Topology Appl.},
  FJOURNAL = {Topology and its Applications},
    VOLUME = {316},
      YEAR = {2022},
     PAGES = {Paper No. 108102, 26},
      ISSN = {0166-8641,1879-3207},
   MRCLASS = {55P91 (16E40 19D55 55T99)},
  MRNUMBER = {4438947},
MRREVIEWER = {Bj\o rn\ Ian\ Dundas},
       DOI = {10.1016/j.topol.2022.108102},
       URL = {https://doi.org/10.1016/j.topol.2022.108102},
}

@misc{AKN24,
      title={On the {$K$}-theory of $\mathbf{Z}/p^n$}, 
      author={Benjamin Antieau and Achim Krause and Thomas Nikolaus},
      year={2024},
      eprint={2405.04329},
      archivePrefix={arXiv},
      primaryClass={math.KT},
      url={https://arxiv.org/abs/2405.04329}, 
}

@book{Bas68,
  title={Algebraic K-theory},
  author={Bass, H.},
  series={Hyman Bass},
  year={1968},
  publisher={W. A. Benjamin}
}

@article {Bru07,
    AUTHOR = {Brun, Morten},
     TITLE = {Witt vectors and equivariant ring spectra applied to
              cobordism},
   JOURNAL = {Proc. Lond. Math. Soc. (3)},
  FJOURNAL = {Proceedings of the London Mathematical Society. Third Series},
    VOLUME = {94},
      YEAR = {2007},
    NUMBER = {2},
     PAGES = {351--385},
      ISSN = {0024-6115,1460-244X},
   MRCLASS = {55P43 (13K05 55N22)},
  MRNUMBER = {2308231},
MRREVIEWER = {Andrey\ Yu.\ Lazarev},
       DOI = {10.1112/plms/pdl010},
       URL = {https://doi.org/10.1112/plms/pdl010},
}

@article{BHM93,
 author = {B{\"o}kstedt, M. and Hsiang, W. C. and Madsen, I.},
 title = {The cyclotomic trace and algebraic {K}-theory of spaces},
 fjournal = {Inventiones Mathematicae},
 journal = {Invent. Math.},
 issn = {0020-9910},
 volume = {111},
 number = {3},
 pages = {465--539},
 year = {1993},
 language = {English},
 doi = {10.1007/BF01231296},
 url = {https://eudml.org/doc/144086},
 zbMATH = {203552},
 Zbl = {0804.55004}
}

@misc{CGK25,
      title={Trace methods for equivariant algebraic {K}-theory}, 
      author={David Chan and Teena Gerhardt and Inbar Klang},
      year={2025},
      eprint={2505.11327},
      archivePrefix={arXiv},
      primaryClass={math.AT},
      url={https://arxiv.org/abs/2505.11327}, 
}

@misc{CGKW26,
      title={Computations in Equivariant Topological Hochschild Homology}, 
      author={David Chan and Marc Gotliboym and Inbar Klang and Noah Wisdom},
      year={2026},
      eprint={2608.11376},
      archivePrefix={arXiv},
      primaryClass={math.AT},
      url={https://arxiv.org/abs/2608.11376}, 
}

@misc{CW25,
      title={The algebraic {$K$}-theory of {G}reen functors}, 
      author={David Chan and Noah Wisdom},
      year={2025},
      eprint={2508.14207},
      archivePrefix={arXiv},
      primaryClass={math.KT},
      url={https://arxiv.org/abs/2508.14207}, 
}

@book {DGM13,
    AUTHOR = {Dundas, Bj\o rn Ian and Goodwillie, Thomas G. and McCarthy,
              Randy},
     TITLE = {The local structure of algebraic {K}-theory},
    SERIES = {Algebra and Applications},
    VOLUME = {18},
 PUBLISHER = {Springer-Verlag London, Ltd., London},
      YEAR = {2013},
     PAGES = {xvi+435},
      ISBN = {978-1-4471-4392-5; 978-1-4471-4393-2},
   MRCLASS = {19-02 (16E40 19D55 55-02 55N99)},
  MRNUMBER = {3013261},
MRREVIEWER = {Charles\ Weibel},
}

@article {HHR,
    AUTHOR = {Hill, M. A. and Hopkins, M. J. and Ravenel, D. C.},
     TITLE = {On the nonexistence of elements of {K}ervaire invariant one},
   JOURNAL = {Ann. of Math. (2)},
  FJOURNAL = {Annals of Mathematics. Second Series},
    VOLUME = {184},
      YEAR = {2016},
    NUMBER = {1},
     PAGES = {1--262},
      ISSN = {0003-486X,1939-8980},
   MRCLASS = {55P91 (55N22 55P42 55Q45 55T15 55U35 57R15)},
  MRNUMBER = {3505179},
MRREVIEWER = {Paul\ G.\ Goerss},
       DOI = {10.4007/annals.2016.184.1.1},
       URL = {https://doi.org/10.4007/annals.2016.184.1.1},
}

@misc{HR26,
      title={Equivariant localizing motives and multiplicative norms on algebraic {K}-theory}, 
      author={Kaif Hilman and Maxime Ramzi},
      year={2026},
      eprint={2603.15873},
      archivePrefix={arXiv},
      primaryClass={math.KT},
      url={https://arxiv.org/abs/2603.15873}, 
}

@article{Hof25,
    title = {Determining unit groups and {$K_1$} of finite rings},
    journal = {Journal of Algebra},
    volume = {693},
    pages = {510-530},
    year = {2026},
    issn = {0021-8693},
    doi = {https://doi.org/10.1016/j.jalgebra.2026.01.019},
    url = {https://www.sciencedirect.com/science/article/pii/S002186932600027X},
    author = {Tommy Hofmann},
}

@article{Nak11a,
title = {Ideals of {T}ambara functors},
journal = {Advances in Mathematics},
volume = {230},
number = {4},
pages = {2295-2331},
year = {2012},
issn = {0001-8708},
author = {Hiroyuki Nakaoka}
}

@article {NS18,
    AUTHOR = {Nikolaus, Thomas and Scholze, Peter},
     TITLE = {On topological cyclic homology},
   JOURNAL = {Acta Math.},
  FJOURNAL = {Acta Mathematica},
    VOLUME = {221},
      YEAR = {2018},
    NUMBER = {2},
     PAGES = {203--409},
      ISSN = {0001-5962,1871-2509},
   MRCLASS = {55U35 (16E40 18E30 19D99)},
  MRNUMBER = {3904731},
MRREVIEWER = {Geoffrey\ M. L. Powell},
       DOI = {10.4310/ACTA.2018.v221.n2.a1},
       URL = {https://doi.org/10.4310/ACTA.2018.v221.n2.a1},
}

@misc{OSCAR,
  key          = {OSCAR},
  organization = {The OSCAR Team},
  title        = {O{SCAR} -- {O}pen {S}ource {C}omputer {A}lgebra {R}esearch system, {V}ersion 1.7.0},
  year         = {2026},
  url          = {https://www.oscar-system.org},
}

@article{Qui72b,
    AUTHOR = {Quillen, Daniel},
     TITLE = {On the cohomology and {$K$}-theory of the general linear groups over a finite field},
   JOURNAL = {Ann. of Math. (2)},
  FJOURNAL = {Annals of Mathematics. Second Series},
    VOLUME = {96},
      YEAR = {1972},
     PAGES = {552--586},
      ISSN = {0003-486X},
}

@article {TW95,
    AUTHOR = {Th\'evenaz, Jacques and Webb, Peter},
     TITLE = {The structure of {M}ackey functors},
   JOURNAL = {Trans. Amer. Math. Soc.},
  FJOURNAL = {Transactions of the American Mathematical Society},
    VOLUME = {347},
      YEAR = {1995},
    NUMBER = {6},
     PAGES = {1865--1961},
      ISSN = {0002-9947,1088-6850},
   MRCLASS = {20C20 (20J05)},
  MRNUMBER = {1261590},
MRREVIEWER = {J.\ L.\ Alperin},
       DOI = {10.2307/2154915},
       URL = {https://doi.org/10.2307/2154915},
}

@book{Wei13,
    AUTHOR = {Weibel, Charles A.},
     TITLE = {The {$K$}-book},
    SERIES = {Graduate Studies in Mathematics},
    VOLUME = {145},
      NOTE = {An introduction to algebraic $K$-theory},
 PUBLISHER = {American Mathematical Society, Providence, RI},
      YEAR = {2013},
     PAGES = {xii+618},
      ISBN = {978-0-8218-9132-2},
   MRCLASS = {19-01},
  MRNUMBER = {3076731},
MRREVIEWER = {L.\ N.\ Vaserstein},
       DOI = {10.1090/gsm/145},
       URL = {https://doi.org/10.1090/gsm/145},
}

@article{Wis24,
title = {A classification of ${C}_{p^n}$-{T}ambara fields},
journal = {Journal of Algebra},
volume = {682},
pages = {233-246},
year = {2025},
issn = {0021-8693},
doi = {https://doi.org/10.1016/j.jalgebra.2025.05.028},
url = {https://www.sciencedirect.com/science/article/pii/S0021869325003333},
author = {Noah Wisdom},
}

@article{Yos83,
  title={On {$G$}-functors ({I}{I}): Hecke operators and {$G$}-functors},
  author={Tomoyuki Yoshida},
  journal={Journal of The Mathematical Society of Japan},
  year={1983},
  volume={35},
  pages={179-190},
}
